\newcommand{\Z}{\mathbb{Z}}
\newcommand{\Q}{\mathbb{C}}
\newcommand{\Sym}[1]{\mathfrak{S}_{#1}}
\newcommand{\Spin}[1]{\widetilde{\mathfrak{S}}_{#1}}
\newcommand{\p}{\Ch^{\mathrm{spin}}}
\newcommand{\Ch}{\mathrm{Ch}}
\newcommand{\ChD}{\widetilde{\Ch}}
\newcommand{\class}{\pi}
\newcommand{\partition}{I}
\newcommand{\CHIL}[2]{\chi^{#1}\left(#2\right)}
\newcommand{\CHIS}[2]{X^{#1}\left(#2\right)}
\newcommand{\CHIT}[2]{\widetilde{\phi}^{#1}\left(#2\right)}
\newcommand{\proP}{\mathbf{p}}
\newcommand{\proQ}{\mathbf{q}}
\DeclareRobustCommand{\stirlingS}{\genfrac\{\}{0pt}{}}
\DeclareRobustCommand{\stirlingF}{\genfrac[]{0pt}{}}
\DeclareMathOperator{\SP}{SP}
\DeclareMathOperator{\OP}{OP}
\theoremstyle{definition}
\newtheorem{definition}{Definition}[section]
\theoremstyle{theorem}
\newtheorem{theorem}[definition]{Theorem}
\newtheorem{corollary}[definition]{Corollary}
\theoremstyle{remark}
\newtheorem{example}[definition]{Example}
\title[Spin Stanley character formula]{Stanley character formula \\ for the spin characters of the symmetric groups}
\author[%
Sho Matsumoto, 
Piotr \'{S}niady]%
{Sho Matsumoto%
\thanks{\href{mailto:{shom@sci.kagoshima-u.ac.jp}}{{shom@sci.kagoshima-u.ac.jp}}.
Research of SM was supported by JSPS KAKENHI Grant Number 17K05281.}%
\addressmark{1}, 
\and 
Piotr \'{S}niady%
\thanks{\href{mailto:psniady@impan.pl}{psniady@impan.pl}. 
Research of P\'{S} was supported by \emph{Narodowe Centrum Nauki}, grant number
2017/26/A/ST1/00189.}\addressmark{2}}
\address{%
\addressmark{1}Graduate School of Science and Engineering, Kagoshima University 
1-21-35, Korimoto, Kagoshima, Japan	 \\
\addressmark{2}Institute of Mathematics, Polish Academy of Sciences,
	ul.~\'Sniadeckich 8, 00-956 Warszawa, Poland 
}
\abstract{%
	We give a new formula for the irreducible spin characters of the symmetric
	groups. This formula is analogous to Stanley's character formula for the usual
	(linear) characters of the symmetric groups. }
\keywords{projective representations of the symmetric groups, spin characters,
	asymptotic representation theory, Stanley character formula, Stanley character
	polynomials}
\begin{document}

\maketitle

A full version (which contains complete versions of all proofs) of this extended
abstract will be available soon \cite{SpinAlgebraic2018} and will be published
elsewhere.
	
\bigskip

The \emph{spin symmetric group} $\Spin{n}$ is the double cover of
the symmetric group $\Sym{n}$. This group is generated by $t_1,\dots,t_{n-1},z$
subject to the relations:
\begin{align*}
z^2   &= 1 ,\\
z t_i &= t_i z, & t_i^2&= z & \text{for $i\in [n-1]$}, \\
(t_i t_{i+1})^3 &= z  & & & \text{for $i\in [n-2]$}, \\
t_i t_j &=  z t_j t_i & & & \text{for $|i-j|\geq 2$};
\end{align*}	
we use the convention that $[k]=\{1,\dots,k\}$. This
group was introduced by Schur \cite{Schur1911}; it is essential for studying
\emph{projective representations} of the usual symmetric group $\Sym{n}$.

\medskip

Schur proved that, roughly speaking, the conjugacy classes of $\Spin{n}$ which
are non-trivial from the viewpoint of the character theory are indexed by
\emph{odd partitions} of $n$, i.e.~partitions  $(\lambda_1,\dots,\lambda_l)$ of
$n$ such that $\lambda_1\geq \cdots\geq \lambda_l$ are odd positive integers.
The set of such odd partitions of $n$ will be denoted by $\OP_n$.

The central element $z\in\Spin{n}$ acts on each irreducible representation by
$\pm 1$. An irreducible representation of $\Spin{n}$ is said to be \emph{spin}
if $z$ corresponds to $-1$. Schur also proved that the irreducible spin
representations of $\Spin{n}$, roughly speaking, correspond to \emph{strict
	partitions} of $n$, i.e.~to partitions  $(\lambda_1,\dots,\lambda_l)$ of $n$
which form a \emph{strictly} decreasing sequence $\lambda_1>\cdots>\lambda_l$ of
positive integers. The set of such strict partitions of $n$ will be denoted by
$\SP_n$. We will represent them as
\emph{shifted Young diagrams}, cf.~\cref{fig:double}.

\begin{figure}[t]
	\centerline{
		\begin{tikzpicture}[xscale=0.6,yscale=0.6]
		\begin{scope}
		\clip (0,0) -- (0,1) -- (1,1) -- (1,2) -- (2,2) -- (2,3) -- (3,3) -- (4,3) -- (4,2) -- (6,2) -- (6,1) -- (6,0);
		\draw[gray] (0,0) grid (8,3);
		\end{scope}
		\draw[->] (0,0) -- (10,0) node[anchor=west]{$x$};
		\draw[->] (0,0) -- (0,4) node[anchor=south]{$y$};	   
		\draw[ultra thick] (0,0) -- (0,1) -- (1,1) -- (1,2) -- (2,2) -- (2,3) -- (3,3) -- (4,3) -- (4,2) -- (6,2) -- (6,1) -- (6,0) -- cycle;
		\draw[line width=0.2cm, opacity=0.4,blue,line cap=round] (0.5,0.5) -- (5.5,0.5);
		\draw[line width=0.2cm, opacity=0.4,blue,line cap=round] (1.5,1.5) -- (5.5,1.5);
		\draw[line width=0.2cm, opacity=0.4,blue,line cap=round] (2.5,2.5) -- (3.5,2.5);	   
		\end{tikzpicture}
		\qquad
		\begin{tikzpicture}[xscale=0.6,yscale=0.6]
		\begin{scope}[xshift=1cm]
		\clip (0,0) -- (0,1) -- (1,1) -- (1,2) -- (2,2) -- (2,3) -- (3,3) -- (4,3) -- (4,2) -- (6,2) -- (6,1) -- (6,0);
		\draw[gray] (0,0) grid (8,3);
		\end{scope}
		\begin{scope}[rotate=90,yscale=-1]
		\clip (0,0) -- (0,1) -- (1,1) -- (1,2) -- (2,2) -- (2,3) -- (3,3) -- (4,3) -- (4,2) -- (6,2) -- (6,1) -- (6,0);
		\draw[gray] (0,0) grid (8,3);
		\end{scope}    
		\begin{scope}[xshift=1cm]
		\draw[ultra thick] (0,0) -- (0,1) -- (1,1) -- (1,2) -- (2,2) -- (2,3) -- (3,3) -- (4,3) -- (4,2) -- (6,2) -- (6,1) -- (6,0) -- cycle;
		\draw[line width=0.2cm, opacity=0.4,blue,line cap=round] (0.5,0.5) -- (5.5,0.5);
		\draw[line width=0.2cm, opacity=0.4,blue,line cap=round] (1.5,1.5) -- (5.5,1.5);
		\draw[line width=0.2cm, opacity=0.4,blue,line cap=round] (2.5,2.5) -- (3.5,2.5);	 
		\end{scope}
		\begin{scope}[rotate=90,yscale=-1]
		\draw[ultra thick] (0,0) -- (0,1) -- (1,1) -- (1,2) -- (2,2) -- (2,3) -- (3,3) -- (4,3) -- (4,2) -- (6,2) -- (6,1) -- (6,0) -- cycle;
		\draw[line width=0.2cm, opacity=0.4,OliveGreen,line cap=round] (0.5,0.5) -- (5.5,0.5);
		\draw[line width=0.2cm, opacity=0.4,OliveGreen,line cap=round] (1.5,1.5) -- (5.5,1.5);
		\draw[line width=0.2cm, opacity=0.4,OliveGreen,line cap=round] (2.5,2.5) -- (3.5,2.5);	
		\end{scope}    
		\draw[line width=3.2pt, dashed,red] (0,0) -- (1,0) -- (1,1) -- (2,1) -- (2,2) -- (3,2) -- (3,3) -- (4,3) -- (4,4) -- (5,4) -- (5,5) -- (6,5) -- (6,6);
		\end{tikzpicture}
	}
	
	\caption{ Strict partition $\lambda=(6,5,2)$ shown as a \emph{shifted Young
		diagram} and its double $D(\lambda)=(7,7,5,3,2,2)$. } 
\label{fig:double}
\end{figure}
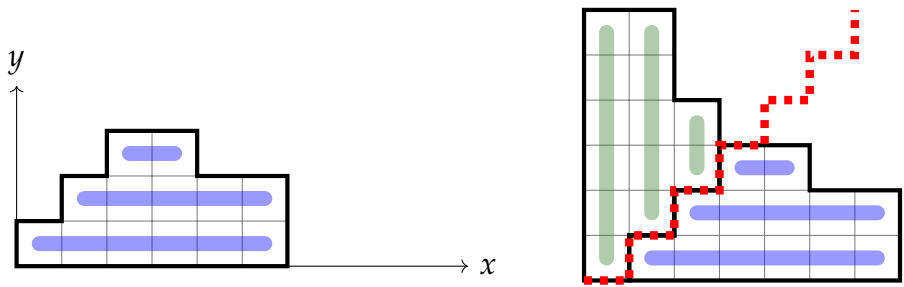

\medskip

For an odd partition $\pi\in\OP_n$ (which corresponds to a conjugacy class of
$\Spin{n}$) and a strict partition $\lambda\in\SP_n$ (which corresponds to its
irreducible spin representation) we denote by $\CHIT{\lambda}{\pi}$ the
corresponding \emph{spin character}
(for some fine details related to this definition we refer to \cite[Section 2]{Ivanov2004}).
\textbf{Our goal is to give a closed formula for such spin characters which
	would be useful for the purposes of the asymptotic representation theory,
	i.e.~which would allow good understanding of the limit $\lambda\to\infty$.} %

\medskip

In the following it will be more convenient to pass to quantities
\[\CHIS{\lambda}{\pi}:=2^{\frac{\ell(\lambda)-\ell(\pi)}{2}}\ \CHIT{\lambda}{\pi},\]
where $\ell(\pi)$ denotes the number of parts of a partition $\pi$,
cf.~\cite[Proposition 3.3]{Ivanov2004}.

\section{Normalized characters}

The usual way of viewing the linear characters $\CHIL{\lambda}{\pi}$ of the
symmetric group $\Sym{n}$ is to fix the irreducible representation $\lambda$ and
to consider the character as a function of the conjugacy class $\class$. 
The \emph{dual approach}, initiated by Kerov and Olshanski \cite{Kerov1994},
suggests to do the opposite: \emph{fix the conjugacy class $\class$ and to view
	the character as a function of the irreducible representation $\lambda$}. 
In
order for this approach to be successful one has to choose the most convenient
normalization constants which we review in the following.
\medskip
	
Following Kerov and Olshanski \cite{Kerov1994}, 
for a fixed integer partition $\class$ the
corresponding \emph{normalized linear character on the conjugacy class $\class$}
is the function on the set of \emph{all} Young diagrams given by
	\[ \Ch_\class(\lambda):=\begin{cases}
	n^{\downarrow k} 
	\ \frac{ \CHIL{\lambda}{\class\cup 1^{n-k}} }{ \CHIL{\lambda}{1^{n}} } & 
	\text{if } n\geq k, \\
	0 & \text{otherwise,}
	\end{cases}\]
	where $n=|\lambda|$ and $k=|\class|$ and $n^{\downarrow k}=n (n-1) \cdots
(n-k+1)$ denotes the falling power. Above, for partitions $\lambda,\sigma\vdash
n$ we denote by $\CHIL{\lambda}{\sigma}$ the irreducible linear character of the
symmetric group which corresponds to the Young diagram $\lambda$, evaluated on
any permutation with the cycle decomposition given by $\sigma$.

	\medskip
	
	Following Ivanov \cite{Ivanov2004,Ivanov2006}, for a fixed odd partition
$\class\in\OP$ the corresponding \emph{normalized spin character} is a function
on the set of \emph{all} strict partitions given by
	\begin{equation}
	\label{eq:projective-normalized}
	\p_\class(\lambda):=\begin{cases}
	n^{\downarrow k}\ 
	\ \frac{ \CHIS{\lambda}{\class\cup 1^{n-k}}}{\CHIS{\lambda}{1^{n}}} = 
	n^{\downarrow k}\ 2^{\frac{k-\ell(\class)}{2}}
		\ \frac{ \CHIT{\lambda}{\class\cup 1^{n-k}}}{ \CHIT{\lambda}{1^{n}}}
	 & \text{if } n\geq k, \\
	0 & \text{otherwise,}
	\end{cases}
	\end{equation}
	where $n=|\lambda|$,  $k=|\class|$, and $\ell(\pi)$ denotes the number of parts of $\pi$.
\textbf{%
	We will find a closed formula for such spin characters $\p_\pi$. We will
	achieve it by finding a link between the families $(\p_\pi)$ and $(\Ch_\pi)$ of
	spin and linear characters.}

	\section{Stanley character formulas}

Let $\sigma_1,\sigma_2\in\Sym{k}$ be permutations and let
$\lambda$ be a Young diagram. Following \cite{FeraySniady2011a}, we say that
\emph{$(f_1,f_2)$ is a coloring of $(\sigma_1,\sigma_2)$ which is compatible
	with $\lambda$} if:
\begin{itemize}
	\item $f_i\colon C(\sigma_i) \to \Z_+$ is a function on the set of cycles of
	$\sigma_i$ for each $i\in\{1,2\}$; we view the values of $f_1$ as columns of
	$\lambda$ and the values of $f_2$ as rows;
	
	\item whenever $c_1\in C(\sigma_1)$ and $c_2\in C(\sigma_2)$ are cycles which are not
	disjoint, the box with the Cartesian coordinates
	$\big( f_1(c_1), f_2(c_2) \big)$ belongs to $\lambda$.
\end{itemize} 
We denote by $N_{\sigma_1,\sigma_2}(\lambda)$ the number of  colorings of
$(\sigma_1,\sigma_2)$ which are compatible with $\lambda$.

\begin{figure}
	\centerline{
		\begin{tikzpicture}[scale=1]
		\draw [->] (0,0) -- (5,0) node[anchor=west] {$x$};
		\draw [->] (0,0) -- (0,3) node[anchor=west] {$y$};
		\begin{scope}
		\clip (0,0) -- (3,0) -- (3,1) -- (1,1) -- (1,2) -- (0,2);
		\draw (0,0) grid (2,2);
		\end{scope}
		\draw[ultra thick] (0,0) -- (3,0) -- (3,1) -- (1,1) -- (1,2) -- (0,2) -- cycle;
		\draw[line width=7pt, opacity=0.3, draw=red] (0.5,-0.5) node[anchor=north,opacity=1, text=black]{$f_1(V)$} -- (0.5,2.5); 
		\draw[line width=7pt, opacity=0.3, draw=blue] (2.5,-0.5) node[anchor=north,opacity=1, text=black]{$f_1(W)$} -- (2.5,1.5); 
		\draw[line width=7pt, opacity=0.3, draw=OliveGreen] (-0.5,0.5) node[anchor=east,opacity=1, text=black]{$f_2(\Pi)$} -- (3.5,0.5); 
		\draw[line width=7pt, opacity=0.3, draw=black] (-0.5,1.5) node[anchor=east,opacity=1, text=black]{$f_2(\Sigma)$} -- (1.5,1.5); 
		\end{tikzpicture}
	} 
	\caption{Graphical representation of the coloring \eqref{eq:coloring} of the
		permutations \eqref{eq:example}  which is compatible with the Young diagram
		$\lambda=(3,1)$.} \label{fig:embed}
\end{figure}
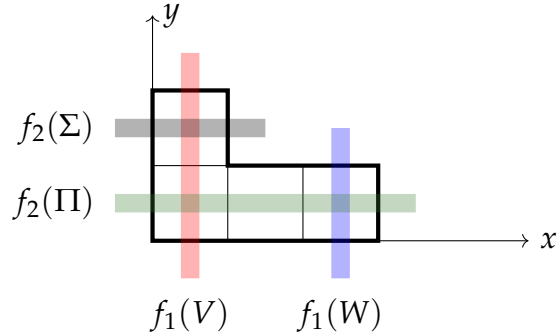

\begin{example}
Let
\begin{equation}
\label{eq:example} 
\sigma_1=\underbrace{(1,5,4,2)}_{\textcolor{red}{V}}\underbrace{(3)}_{\textcolor{blue}{W}},\qquad  \sigma_2 = \underbrace{(2,3,5)}_{\textcolor{OliveGreen}{\Pi}}\underbrace{(1,4)}_{\Sigma}.
\end{equation}
There are three pairs of cycles $(\sigma_1,\sigma_2)\in C(\sigma_1)\times
C(\sigma_2)$ with the property that $\sigma_1$ and $\sigma_2$ are not disjoint,
namely $(V,\Pi), (V,\Sigma), (W,\Pi)$.
It is now easy to check graphically (cf.~\cref{fig:embed}) that $(f_1,f_2)$ is
indeed a coloring compatible with $\lambda=(3,1)$ for
\begin{equation} 
\label{eq:coloring}
f_1(V)=1, \quad f_1(W)=3, \quad f_2(\Pi)=1, \quad f_2(\Sigma)=2.
\end{equation}

By considering four possible choices for the values of $f_2$ and counting the
choices for the values of $f_1$ one verify that $N_{\sigma_1,\sigma_2}(\lambda)=
3^2 +  3 + 1 + 1 =14$ for $\lambda=(3,1)$.
\end{example}

\subsection{Linear Stanley character formula} 

Stanley \cite{Stanley2006} conjectured a certain closed formula for the  linear
characters of the symmetric groups. One of its proofs \cite{FeraySniady2011a}
was obtained by rewriting it in an equivalent form which we will recall in the
following. We refer to \cite{Sniady2016b} for more context.

We will identify a given integer partition $\pi=(\pi_1,\dots,\pi_\ell)\vdash k$
with an arbitrary permutation $\pi\in\Sym{k}$ with the corresponding cycle
structure. For example, we may take
\[ \pi=(1,2,\dots,\pi_1)(\pi_1+1,\pi_1+2,\dots,\pi_1+\pi_2) \cdots\in\Sym{k}.\]

\begin{theorem}[\cite{FeraySniady2011a}]
	\label{thm:stanley-linear} For any partition $\pi\vdash k$ and any Young diagram
$\lambda$
	\begin{equation}  
	\label{eq:stanley-linear} 
	\Ch_\pi(\lambda) = \sum_{\substack{\sigma_1,\sigma_2\in\Sym{k} \\ \sigma_1 \sigma_2=\pi }}  (-1)^{\sigma_1} N_{\sigma_1,\sigma_2} (\lambda), 
	\end{equation}
where $(-1)^{\sigma_1}\in\{-1,1\}$ denotes the sign of the permutation $\sigma_1$.
\end{theorem}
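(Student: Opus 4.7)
My plan is to recast both sides of \eqref{eq:stanley-linear} in a common tensor-representation framework, where the signs and counts match term-by-term. First I would observe that both sides, viewed as functions of $\pi$ with $\lambda$ fixed, are class functions on $\Sym{k}$: the left-hand side by the definition of $\Ch_\pi$, and the right-hand side because simultaneous conjugation $(\sigma_1,\sigma_2)\mapsto (\tau\sigma_1\tau^{-1},\tau\sigma_2\tau^{-1})$ preserves the sign of $\sigma_1$ and, via the induced bijection on cycles, the count $N_{\sigma_1,\sigma_2}(\lambda)$. Thus it will suffice to verify equality after passing to arbitrary elements of $\C\Sym{k}$, one per conjugacy class.

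The key step is to realize $\Ch_\pi(\lambda)$ as a trace of $\pi$ acting on a tensor module naturally attached to $\lambda$. Let $V$ be a free module whose basis is indexed by the columns of $\lambda$ and $W$ one whose basis is indexed by the rows, and consider $\Sym{k}$ acting on $V^{\otimes k}\otimes W^{\otimes k}$. Using a Young-symmetrizer-type operator that antisymmetrizes over columns of $\lambda$ and symmetrizes over rows (cut off by the shape constraint), the trace of $\pi$ against this operator equals $\CHIL{\lambda}{\pi\cup 1^{n-k}}$ up to an explicit factor that precisely cancels the normalization $n^{\downarrow k}/\CHIL{\lambda}{1^n}$ appearing in $\Ch_\pi$. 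Expanding this trace in the standard basis of $V^{\otimes k}\otimes W^{\otimes k}$, whose elements correspond to pairs of functions $(f_1,f_2)\colon [k]\to \Z_+\times\Z_+$ taking values in the columns and rows of $\lambda$, and simultaneously expanding the antisymmetrizer (which contributes the sign) and symmetrizer (which does not), yields a sum indexed by pairs of permutations $(\sigma_1,\sigma_2)$. The trace condition forces $\sigma_1\sigma_2=\pi$; the requirement that the basis vector parametrized by $(f_1,f_2)$ be fixed under the appropriate actions forces $f_1$ and $f_2$ to be constant on the cycles of $\sigma_1$ and $\sigma_2$ respectively; and the cutoff by the shape of $\lambda$ imposes exactly the condition that non-disjoint cycles produce a valid box of $\lambda$. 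This gives $N_{\sigma_1,\sigma_2}(\lambda)$ compatible colorings with sign $(-1)^{\sigma_1}$.

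The main obstacle I anticipate is the normalization bookkeeping: one must verify that $n^{\downarrow k}/\CHIL{\lambda}{1^n}$ combines cleanly with the dimensions of $V$ and $W$ so that each compatible coloring contributes with coefficient exactly $1$, and one must ensure that the sign $(-1)^{\sigma_1}$ arises purely from column antisymmetrization with no stray signs from the $\pi$-action or from the rewriting $\sigma_1\sigma_2=\pi$. A cleaner alternative would be to exploit the Kerov--Olshanski characterization of $\Ch_\pi$ as a polynomial function on Young diagrams of controlled degree, check that the RHS is also such a polynomial function of the same degree, and then match both sides on rectangular diagrams, where Stanley's original formula \cite{Stanley2006} applies directly; this reduces the problem to a finite verification on a spanning family of test shapes.
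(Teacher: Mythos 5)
Your primary plan is the same underlying mechanism the paper sketches in the section on irreducible representations: realize the right-hand side of \eqref{eq:stanley-linear} as the trace of a Young symmetrizer, with the sign coming from column antisymmetrization and $N_{\sigma_1,\sigma_2}(\lambda)$ from expanding the symmetrizer and counting basis vectors fixed along cycles. But your setup contains a domain mismatch that is more than the normalization bookkeeping you flag. The Young symmetrizer of $\lambda$ lives in $\C\Sym{n}$ with $n=|\lambda|$ (it permutes the $n$ boxes of $\lambda$), whereas your $\Sym{k}$ acts on $V^{\otimes k}\otimes W^{\otimes k}$ by permuting $k$ tensor slots; the operator you describe as ``antisymmetrizing over columns and symmetrizing over rows'' is therefore not an endomorphism of your module, and the permutations produced by its expansion would live in $\Sym{n}$, not in the index set $\Sym{k}$ of the target sum. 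Passing from a sum over $\Sym{n}$ to a sum over $\Sym{k}$ --- and extracting $n^{\downarrow k}/\CHIL{\lambda}{1^n}$ from the $1^{n-k}$ part along the way --- is the substantive step, not a cancellation to be verified afterwards. The paper's construction handles this by taking $V$ to be the space of fillings of $\lambda$ by $[n]$, which is $\C\Sym{n}$ as a bimodule: $\pi\cup 1^{n-k}$ acts by relabelling values, the Young symmetrizer $\Pi$ acts by shuffling boxes, the two actions commute, and the trace can be computed entirely inside $\Sym{n}$ before being explicitly reduced to a $k$-fold sum.

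Your alternative route has a separate gap. Both sides are indeed shifted-symmetric functions of bounded degree, but such a function is not determined by its values on one-rectangle shapes $p\times q$: on rectangles every free cumulant is a polynomial in the two parameters $p,q$, so there are algebraic relations among them (for instance $R_3^2=R_2R_4+R_2^3$ holds on all rectangles), hence nonzero polynomials in free cumulants that vanish identically there. To separate candidates one needs the multirectangular family, and the assertion that the two sides of \eqref{eq:stanley-linear} agree on multirectangular diagrams is precisely Stanley's conjecture from \cite{Stanley2006}, i.e.\ the content of the theorem; so as stated the reduction is circular.
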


This linear Stanley character formula is closely related to \emph{Kerov
	polynomials} \cite{Stanley2006}, which are expressions of the characters $\Ch_\pi$ in terms of
\emph{free cumulants} of Young diagrams. Recently, the first author
\cite{Matsumoto2018} found spin counterparts for Kerov polynomials. The current
paper was initiated by attempts to understand the underlying structures behind this result.

\subsection{The main result: spin Stanley character formula}

For a strict partition $\lambda\in\SP_n$ we consider its \emph{double}
$D(\lambda)$ which is an integer partition of $2n$. Graphically, $D(\lambda)$
corresponds to a Young diagram obtained by arranging the \emph{shifted Young
	diagram} $\lambda$ and its `transpose' so that they nicely fit along the
`diagonal', cf.~\cref{fig:double}, see also \cite[page 9]{Macdonald1995}.

For $\sigma_1,\sigma_2\in\Sym{k}$ we denote by $|\sigma_1\vee \sigma_2|$ the
number of orbits of the group $\langle \sigma_1,\sigma_2\rangle$ generated by
$\sigma_1$ and $\sigma_2$.
As before,
we identify an integer partition $\pi\vdash k$ with an arbitrary permutation
$\pi\in\Sym{k}$ with the corresponding cycle structure.

\begin{theorem}[The main result]
	\label{thm:spin-Stanley}
	For any odd partition $\pi\in\OP_k$ and $\lambda\in\SP$
	\begin{equation}
	\label{eq:spin-stanley}
	\p_\pi(\lambda) = \sum_{\substack{\sigma_1,\sigma_2\in\Sym{k} \\ \sigma_1 \sigma_2=\pi }} \frac{1}{2^{|\sigma_1\vee \sigma_2|}}\ (-1)^{\sigma_1}\ N_{\sigma_1,\sigma_2}\big( D(\lambda) \big). 
	\end{equation}
\end{theorem}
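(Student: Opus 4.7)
The plan is to reduce the spin Stanley formula to the linear Stanley formula (Theorem~\ref{thm:stanley-linear}) applied to the doubled Young diagram $D(\lambda)$, with the weight $\tfrac{1}{2^{|\sigma_1\vee\sigma_2|}}$ emerging as an averaging factor reflecting the self-conjugacy $D(\lambda)=D(\lambda)^T$.

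The first step is to establish a character-theoretic bridge between spin and linear characters. Classical identities going back to Schur, Morris, and Stembridge express $\CHIT{\lambda}{\pi}$ for odd $\pi$ as a rational multiple of $\chi^{D(\lambda)}$ evaluated on permutations whose cycle type is derived from $\pi$. Combining these with the normalization in \eqref{eq:projective-normalized}, one should obtain an identity of the schematic form
\[ \p_\pi(\lambda) = \sum_{\rho} c_{\pi,\rho}\ \Ch_\rho\bigl(D(\lambda)\bigr), \]
with explicit dyadic rational coefficients $c_{\pi,\rho}$ and $\rho$ ranging over a finite family of integer partitions naturally built from $\pi$.

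Next, I would apply the linear Stanley formula to each $\Ch_\rho\bigl(D(\lambda)\bigr)$, producing a double sum indexed by pairs $(\tau_1,\tau_2)$ with $\tau_1\tau_2=\rho$ and weighted by $(-1)^{\tau_1}\,N_{\tau_1,\tau_2}\bigl(D(\lambda)\bigr)$. The remaining task is to reindex this sum by the original factorizations $\sigma_1\sigma_2=\pi$ in $\Sym{k}$. The heart of the argument --- and the main obstacle --- is to show that the combined contribution of all pairs $(\tau_1,\tau_2)$ lifting a given $(\sigma_1,\sigma_2)$ collapses to exactly $\tfrac{1}{2^{|\sigma_1\vee\sigma_2|}}(-1)^{\sigma_1}\,N_{\sigma_1,\sigma_2}\bigl(D(\lambda)\bigr)$. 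The power $2^{|\sigma_1\vee\sigma_2|}$ should materialize as the number of independent $\Z/2$-choices available when distributing the orbits of $\langle\sigma_1,\sigma_2\rangle$ between the ``upper'' and ``lower'' triangles of the self-conjugate diagram $D(\lambda)$: each orbit contributes one binary choice, hence one factor of $\tfrac12$ after averaging against the coefficients $c_{\pi,\rho}$.

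As a fallback, if this orbit-counting becomes too delicate, a second route is to mimic the original F\'{e}ray--\'{S}niady proof of Theorem~\ref{thm:stanley-linear}: build a partial-permutation algebra adapted to the spin symmetric group $\Spin{k}$, re-prove the formula by induction on $k$, and verify the base cases by direct computation on small strict partitions.
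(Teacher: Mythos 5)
Your overall strategy — pass through $D(\lambda)$, feed the linear Stanley formula into a spin-to-linear bridge, and then collapse the resulting sum — is the same as the paper's, but two pieces of your plan diverge from what actually works and leave genuine gaps.

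First, the ``bridge'' is not a linear combination $\sum_\rho c_{\pi,\rho}\,\Ch_\rho\bigl(D(\lambda)\bigr)$. The identity that falls out of the Schur/Macdonald relation $\varphi(s_{D(\lambda)})=2^{-\ell(\lambda)}Q_\lambda^2$ is inherently \emph{quadratic}: it gives $\frac{1}{2}\Ch_\class\bigl(D(\lambda)\bigr)$ as a sum over set-partitions of $[\ell(\class)]$ into at most two blocks of \emph{products} $\prod_b\p_{(\class_i:i\in b)}$ (Theorem~\ref{thm:linear-spin}), and the inversion expressing $\p_\class$ in terms of the $\ChD_{(\class_i:i\in b)}$'s (Theorem~\ref{theo:spin-in-linear}) is likewise a sum over \emph{all} set-partitions of $[\ell(\class)]$ of products, with coefficients $(-1)^{|I|-1}(2|I|-3)!!$ counted by total binary partitions. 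You then substitute the linear Stanley formula into \emph{each factor}; the factorizations over the several blocks combine into a single factorization of $\pi$, and this is exactly where the block structure matters. So the object you should expand is a multilinear expression, not a single $\Ch_\rho$.

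Second, your proposed mechanism for the weight $1/2^{|\sigma_1\vee\sigma_2|}$ — independent $\Z/2$-choices distributing orbits between the two triangles of the self-conjugate diagram — is not what produces it, and as stated it is only an intuition, not an argument. After substituting linear Stanley, one obtains a weight $c_{\sigma_1,\sigma_2}$ given by a sum over set-partitions of the orbit set of $\langle\sigma_1,\sigma_2\rangle$ with the $(-1)^{|I|-1}(2|I|-3)!!\cdot 2^{-|I|}$ factors, and the only thing one needs is that this depends only on the number $m$ of orbits. Rather than evaluate the resulting Stirling-number sum directly, the paper pins it down by a clean normalization trick: set $\pi=1^m$, use $\p_{1^m}(\lambda)=n^{\downarrow m}$, and compare leading coefficients in $n$ to conclude $C_m=1/2^m$. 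Your sketch contains no such determination of the coefficient, and your symmetry heuristic would need to be made precise (and checked against the sign $(-1)^{\sigma_1}$ and the fact that $N_{\sigma_1,\sigma_2}\neq N_{\sigma_2,\sigma_1}$ in general) before it could replace that step.
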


The remaining sections of this paper
(\cref{sec:linear-in-terms-of-spin,sec:spin-linear,sec:proof}) are devoted to a
sketch of the proof of this result. In the following we will discuss some of its
applications.

\subsection{Application: bounds on spin characters}

The following character bound is a spin version of an analogous result for the
linear characters of the symmetric group \cite{FeraySniady2011a}. It is a direct
application of \cref{thm:spin-Stanley} and its proof follows the same line as
its linear counterpart \cite{FeraySniady2011a}.

\begin{corollary}
\label{cor:Annals}
There exists a universal constant $a>0$ with the property that for any integer
$n\geq 1$, any strict partition $\lambda\in\SP_n$, and any odd partition
$\pi\in\OP_n$
\[
2^{\frac{n-\ell(\pi)}{2}} \ 
\left| \frac{\CHIT{\lambda}{\pi}}{\CHIT{\lambda}{1^n}} \right| 
=
\left| \frac{\CHIS{\lambda}{\pi}}{\CHIS{\lambda}{1^n}} \right| < \left[a \max\left( \frac{\lambda_1}{n} , \frac{n-\ell(\pi)}{n} \right) \right]^{n-\ell(\pi)}. \]
\end{corollary}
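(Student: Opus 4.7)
The starting point is the spin Stanley formula of \cref{thm:spin-Stanley}. First, specializing the definition \eqref{eq:projective-normalized} to $\pi \in \OP_n$ (so $k = n$ and $n^{\downarrow k} = n!$) yields $\p_\pi(\lambda) = n!\,\CHIS{\lambda}{\pi}/\CHIS{\lambda}{1^n}$, and the corollary becomes equivalent to bounding $|\p_\pi(\lambda)|$ by $n!\cdot\bigl[a\,\max(\lambda_1/n,\,(n-\ell(\pi))/n)\bigr]^{n-\ell(\pi)}$. Applying \cref{thm:spin-Stanley} together with the triangle inequality then reduces the problem to a purely combinatorial estimate on
\[
\sum_{\sigma_1\sigma_2=\pi}\frac{1}{2^{|\sigma_1\vee\sigma_2|}}\,N_{\sigma_1,\sigma_2}\!\big(D(\lambda)\big).
\]

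Next I would import the combinatorial bounds on $N_{\sigma_1,\sigma_2}(\mu)$ and the summation-over-factorizations arguments developed in \cite{FeraySniady2011a} for the proof of the analogous \emph{linear} character bound, and apply them with $\mu=D(\lambda)$. These bounds are of the type $N_{\sigma_1,\sigma_2}(\mu)\leq \mu_1^{\alpha}\,\ell(\mu)^{\beta}\,|\mu|^{|\sigma_1\vee\sigma_2|}$ for exponents $\alpha,\beta$ determined by the pair $(\sigma_1,\sigma_2)$. For $\mu=D(\lambda)$ one has $|D(\lambda)|=2n$ and, since $\lambda$ is strict (whence $\ell(\lambda)\leq\lambda_1$), $\max(D(\lambda)_1,\,\ell(D(\lambda)))\leq \lambda_1+\ell(\lambda)\leq 2\lambda_1$; so all the relevant parameters of $D(\lambda)$ are controlled by $\lambda_1$ and $n$ up to an absolute constant.

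The conceptual key is that the weight $2^{-|\sigma_1\vee\sigma_2|}$ in \eqref{eq:spin-stanley} is exactly calibrated to compensate for the doubling $|D(\lambda)|=2|\lambda|$: one has $2^{-|\sigma_1\vee\sigma_2|}\cdot|D(\lambda)|^{|\sigma_1\vee\sigma_2|} = n^{|\sigma_1\vee\sigma_2|}$, so after rescaling, the weighted bound becomes formally identical---up to overall constants that can be absorbed into $a$---to what the linear Féray--Śniady argument produces for a Young diagram of size $n$ with first row and column both $O(\lambda_1)$. From that point the conclusion follows by the summation argument of \cite{FeraySniady2011a}, which groups the factorizations $\sigma_1\sigma_2=\pi$ by combinatorial type and yields precisely the exponent $n-\ell(\pi)$. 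The main obstacle is a purely bookkeeping task: verifying that the constant-factor losses incurred by $|D(\lambda)|\leq 2|\lambda|$ and $\max(D(\lambda)_1,\ell(D(\lambda)))\leq 2\lambda_1$ can be absorbed into the universal constant $a$, and that the Féray--Śniady estimates survive this harmless rescaling. The elegance of the approach is that the specific coefficient $2$ in \eqref{eq:spin-stanley} is exactly the one dictated by $|D(\lambda)|/|\lambda|=2$, so no structural modification of the linear argument is needed.
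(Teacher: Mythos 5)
Your proposal matches the paper's approach: the paper only remarks that this corollary is a direct application of \cref{thm:spin-Stanley} and that its proof follows the same line as the linear character bound of \cite{FeraySniady2011a}, which is precisely what you carry out. Your identification of the key point---that the weight $2^{-|\sigma_1\vee\sigma_2|}$ in \eqref{eq:spin-stanley} exactly rescales $|D(\lambda)|=2n$ back to $n$, and that $D(\lambda)_1$ and $\ell(D(\lambda))$ are both $O(\lambda_1)$, so the F\'eray--\'Sniady estimates carry over up to constants absorbed into $a$---is the right observation that makes the adaptation work.
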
	

Several asymptotic results about (random) Young diagrams and tableaux which use
the inequality from \cite{FeraySniady2011a} can be generalized in a rather
straightforward way to (random) \emph{shifted} Young diagrams and \emph{shifted}
tableaux thanks to \cref{cor:Annals}. A good example is provided by the results
about the asymptotics of the number of skew standard Young tableaux of
prescribed shape \cite{Dousse2017} which can be generalized in this way to
asymptotics of the number of skew \emph{shifted} standard Young tableaux.

\subsection{Application: characters on mutltirectangular Young diagrams}

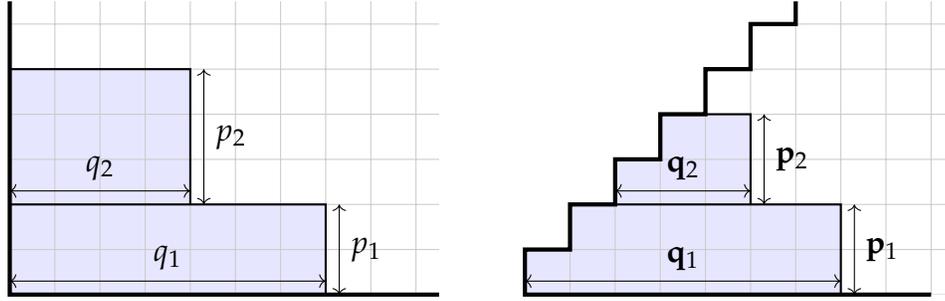
\begin{figure}
	\centerline{
			\begin{tikzpicture}[scale=0.6]	   
		\clip (-0.3,-0.3) rectangle (9.5,6.5);
		\fill[blue!10] (0,0) -- (7,0) -- (7,2) -- (4,2) -- (4,5) -- (0,5);
		\draw[black!20] (0,0) grid (10,7);
		\draw[ultra thick] (10,0) -- (0,0) -- (0,7);
		\draw[thick] (7,0) -- (7,2) -- (0,2);
		\draw[<->] (7.3,0) -- (7.3,2) node[midway,right] {$p_1$};
		\draw[<->] (0,0.3) -- (7,0.3) node[midway,above] {$q_1$};
		\draw[thick] (4,2) -- (4,5) -- (0,5);
		\draw[<->] (4.3,2) -- (4.3,5) node[midway,right] {$p_2$};
		\draw[<->] (0,2.3) -- (4,2.3) node[midway,above] {$q_2$};
		\end{tikzpicture}
		\qquad
	\begin{tikzpicture}[scale=0.6]	 
	\clip (0.7,-0.3) rectangle (10.5,6.5);
	\fill[blue!10] (4,4) -- (6,4) -- (6,2)-- (8,2) -- (8,0) -- (1,0) -- (1,1) -- (2,1) -- (2,2) -- (3,2) -- (3,3) -- (4,3) -- (4,4) -- (5,4);  
	\begin{scope}
	\clip (11,0) -- (1,0) -- (1,1) -- (2,1) -- (2,2) -- (3,2) -- (3,3) -- (4,3) -- (4,4) -- (5,4) -- (5,5) -- (6,5) -- (6,6) -- (7,6) -- (7,7) -- (11,7);
	\draw[black!20] (0,0) grid (11,11);
	\end{scope}
	\draw[ultra thick] (10,0) -- (1,0) -- (1,1) -- (2,1) -- (2,2) -- (3,2) -- (3,3) -- (4,3) -- (4,4) -- (5,4) -- (5,5) -- (6,5) -- (6,6) -- (7,6) -- (7,7);
	\draw[thick] (4,4) -- (6,4) -- (6,2) ;
	\draw[thick] (2,2) -- (8,2) -- (8,0);
	\draw[<->] (8.3,0) -- (8.3,2) node[midway,right] {$\proP_1$};
	\draw[<->] (1,0.3) -- (8,0.3) node[midway,above] {$\proQ_1$};
	\draw[<->] (6.3,2) -- (6.3,4) node[midway,right] {$\proP_2$};
	\draw[<->] (3,2.3) -- (6,2.3) node[midway,above] {$\proQ_2$};
	\end{tikzpicture}
} 
\caption{Multirectangular Young diagram $P\times Q$ and multirectangular
shifted Young diagram $\mathbf{P}\rtimes \mathbf{Q}$.} 
\label{fig:multirectangular-strict}
\end{figure}

Following Stanley \cite{Stanley2003/04}, for tuples of integers
$P=(p_1,\dots,p_l)$, $Q=(q_1,\dots,q_l)$ which fulfil some obvious inequalities
we consider the corresponding \emph{multirectangular Young diagram} $P\times Q$,
cf.~\cref{fig:multirectangular-strict}. Stanley
\cite{Stanley2003/04,Stanley2006} initiated investigation of the characters
$\Ch_\pi(P\times Q)$ viewed as polynomials in the multirectangular coordinates
$p_1,\dots,p_l,q_1,\dots,q_l$; these polynomials now are referred to as
\emph{(linear) Stanley character polynomials}.

The number of colorings $N_{\sigma_1,\sigma_2}(P\times
Q)\in\Z[p_1,\dots,p_l,q_1,\dots,q_l]$ is given by a very explicit, convenient
polynomial. In this way the linear Stanley formula (\cref{thm:stanley-linear})
gives an explicit expression for the linear Stanley polynomials.

\medskip

De Stavola \cite{DeStavolaThesis} adapted these concepts to shifted
multirectangular Young diagrams $\mathbf{P}\rtimes \mathbf{Q}$
cf.~\cref{fig:multirectangular-strict} and initiated investigation of \emph{spin
	Stanley polynomials} $\p_\pi(\mathbf{P}\rtimes \mathbf{Q})$. Thanks to
\cref{thm:spin-Stanley}, by expressing the multirectangular coordinates $P,Q$ of
the double $P\times Q=D(\mathbf{P}\rtimes \mathbf{Q})$ in terms of the shifted
multirectangular coordinates $\mathbf{P},\mathbf{Q}$ one can obtain a rather
straightforward expression for the spin Stanley polynomial
$\p_\pi(\mathbf{P}\rtimes \mathbf{Q})$. Applications of this result to
\emph{spin Kerov polynomials} will be discussed in a forthcoming paper
\cite{SpinAlgebraic2018}.

\subsection{Towards irreducible representations of spin groups} 

The proof of
the linear Stanley formula \eqref{eq:stanley-linear} presented in
\cite{FeraySniady2011a} was found in the following way. We attempted to
reverse-engineer the right-hand side of \eqref{eq:stanley-linear} and to find
\begin{itemize}
	\item some natural vector space $V$ with the basis indexed by combinatorial objects;
	the space $V$ should be a representation of the symmetric group $\Sym{n}$ with
	$n:=|\lambda|$, and 
	\item  a projection $\Pi\colon V\to V$ such that $\Pi$ commutes
	with the action of $\Sym{n}$ and such that its image $\Pi V$ is an irreducible
	representation of $\Sym{n}$ which corresponds to the specified Young diagram
	$\lambda$
\end{itemize}
in such a way that the corresponding character of $\Pi V$ would coincide with
the right-hand side of \eqref{eq:stanley-linear}.

Our attempt was successful: one could consider a vector space $V$ with the basis
indexed by fillings of the boxes of $\lambda$ with the numbers $[n]$. The action
of $\Sym{n}$ on this basis was given by pointwise relabelling of the values in
the boxes. The projection $\Pi$ turned out to be the Young symmetrizer with the
action given by shuffling of the boxes in the rows and columns of $\lambda$. The
resulting representation $\Pi V$ clearly coincides with the Specht module, which
concluded the proof.

\medskip

The structure of the right-hand side of \eqref{eq:spin-stanley} might be an
indication that an analogous reverse-engineering process could be applied to
the spin case. The result would be a very explicit construction of the
irreducible spin representations which would be an alternative to the somewhat
complicated approach of Nazarov \cite{Nazarov1990}.

\section{Linear characters in terms of spin characters}
	\label{sec:linear-in-terms-of-spin}

For $\lambda\in\SP$ and $\class\in\OP$ we denote
\[ \ChD_\class(\lambda):=\frac{1}{2}\ \Ch_\class\big( D(\lambda) \big).\]

The following result is an intermediate step in the proof of
\cref{thm:spin-Stanley} but it might be of independent interest. In particular,
in a forthcoming paper \cite{Matsumoto2018a} we shall discuss its applications
in the study of random strict partitions as well as random shifted standard
Young tableaux.
	
	\begin{theorem}
		\label{thm:linear-spin}
	 For any odd integers $k_1,k_2,\ldots\geq 1$ the following equalities between
functions on the set $\SP$ of strict partitions hold true:
		\begin{align}
		\label{eq:example-chd}
		\ChD_{k_1} &=  
		\p_{k_1}, \\
		\nonumber
		\ChD_{k_1,k_2} &=  \p_{k_1,k_2} +
		\p_{k_1}\ 
		\p_{k_2}, \\
		\nonumber
		\ChD_{k_1,k_2,k_3} &=  \p_{k_1,k_2,k_3} +
		\p_{k_1,k_2}\
		\p_{k_3} + 
		\p_{k_1,k_3}\
		\p_{k_2} 
		+
		\p_{k_2,k_3}\ 
		\p_{k_1},   \\
		\nonumber \vdots \\
		\label{eq:linear-in-spin}
		\ChD_{k_1,\dots,k_l} &= \sum_{\substack{ \partition: \\
				|\partition|\leq 2} } \prod_{b\in \partition} \p_{(k_i : i \in b)},
		\end{align}
		where the sum in \eqref{eq:linear-in-spin} runs over all set-partitions of
		$[l]$ into at most two blocks.
	\end{theorem}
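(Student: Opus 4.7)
The plan is to recognize that the family of identities \eqref{eq:example-chd}--\eqref{eq:linear-in-spin} is secretly a single formal squaring relation. Setting
\[\tilde F(t) = \sum_{l\geq 0}\frac{1}{l!}\sum_{k_1,\dots,k_l\text{ odd}}\ChD_{k_1,\dots,k_l}\,t_{k_1}\cdots t_{k_l}\]
and analogously $\tilde G$ with $\p$ in place of $\ChD$ (using the conventions $\ChD_\emptyset=\tfrac12$ and $\p_\emptyset=1$), a short combinatorial check shows that the coefficient of $t_{k_1}\cdots t_{k_l}$ in $\tilde G^{\,2}$ equals $\sum_{S\subseteq[l]}\p_{(k_i:i\in S)}\p_{(k_i:i\notin S)}$; grouping pairs $\{S,S^c\}$, this is exactly twice the sum over set partitions of $[l]$ with at most two blocks. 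Hence the theorem reduces to the single functional identity $\tilde G^{\,2}=2\tilde F$.

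I would prove this squaring identity via the Frobenius correspondence together with a classical symmetric-function identity. Under Frobenius, $\chi^{D(\lambda)}$ corresponds to the Schur function $s_{D(\lambda)}\in\Lambda$, so that $s_{D(\lambda)}=\sum_\sigma\chi^{D(\lambda)}(\sigma)\,p_\sigma/z_\sigma$. The spin analogue, due to Schur, associates to each $\lambda\in\SP$ the generating function
\[\Phi_\lambda = \sum_{\pi\in\OP_{|\lambda|}}\CHIS{\lambda}{\pi}\,\frac{p_\pi}{z_\pi}\]
in the subring $\Gamma\subset\Lambda$ generated by the odd power sums $p_1,p_3,\dots$. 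The key ingredient is the classical identity
\[s_{D(\lambda)}\big|_\Gamma = \frac{1}{2^{\ell(\lambda)}}\,\Phi_\lambda^{\,2},\]
where $|_\Gamma$ denotes the specialization $p_{2k}\mapsto 0$ for all $k\geq 1$; this is precisely the symmetric-function avatar of the desired squaring.

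Extracting the coefficient of $p_\pi$ on both sides for a fixed odd partition $\pi\in\OP_{2|\lambda|}$ produces a character identity expressing $\chi^{D(\lambda)}(\pi)$ as a weighted sum, over ordered decompositions $\pi=\pi'\sqcup\pi''$ with $|\pi'|=|\pi''|=|\lambda|$, of products $\CHIS{\lambda}{\pi'}\CHIS{\lambda}{\pi''}$. Translating to the normalized characters via \eqref{eq:projective-normalized}, the various factors---the $\tfrac12$ in $\ChD$, the $2^{(k-\ell(\pi))/2}$ in $\p$, the $2^{-\ell(\lambda)}$ in front of $\Phi_\lambda^{\,2}$, the $2^{(\ell(\lambda)-\ell(\pi))/2}$ relating $\CHIS{\lambda}{\pi}$ and $\CHIT{\lambda}{\pi}$, the falling factorials $(2|\lambda|)^{\downarrow k}$ versus $|\lambda|^{\downarrow k}$, and the multinomial counts of ordered decompositions---all conspire and cancel, leaving exactly the coefficient $1$ in front of each product $\prod_{b\in\partition}\p_{(k_i:i\in b)}$ in \eqref{eq:linear-in-spin}.

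The principal obstacle is precisely this normalization bookkeeping: every power of $2$ must be tracked through the Frobenius correspondence, the odd-power-sum projection together with the classical squaring identity, and the conventions of \eqref{eq:projective-normalized}, and their product must come out to exactly $1$. A secondary technical point is that the restriction to odd partitions $\pi$ in the character identity follows automatically from the vanishing of spin characters on classes containing an even cycle, which is itself what makes the specialization $p_{2k}\mapsto 0$ the natural one on the symmetric-function side.
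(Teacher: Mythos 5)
Your proposal takes essentially the same route as the paper: the identity $\psi(s_{D(\lambda)})=2^{-\ell(\lambda)}\Phi_\lambda^{\,2}$ you cite is, after the rescaling $p_r\mapsto 2p_r$ on odd power sums, exactly the Macdonald identity $\varphi(s_{D(\lambda)})=2^{-\ell(\lambda)}Q_\lambda^{\,2}$ that the paper uses, and the rest of the argument (Frobenius expansions of $s_{D(\lambda)}$ and $Q_\lambda$ in power sums, coefficient extraction at $p_{\pi\cup 1^{2n-|\pi|}}$, division by the dimension identity, and normalization bookkeeping) matches the paper's proof step for step; the generating-function packaging of the target identity as $\tilde G^2=2\tilde F$ is a harmless cosmetic reformulation of \eqref{eq:linear-in-spin}. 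Two things you flag but do not carry out are genuinely where the work lies: the paper tracks all powers of $2$ and the $z_\sigma$ factors explicitly, and it first proves the statement under the hypothesis $m_1(\pi)=0$ (so that $z_{\mu\cup 1^{n-|\mu|}}$ factors cleanly as $z_\mu\cdot(n-|\mu|)!$), deducing the general case afterwards via a short binomial-coefficient manipulation -- a reduction you would need to make explicit for the coefficient extraction to be legitimate.
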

	
	\begin{proof}
		Recall the symmetric function algebra $\Lambda=\Q[p_1,p_2,p_3,\dots]$ and its
subalgebra, \emph{the algebra of supersymmetric functions}
$\Gamma=\Q[p_1,p_3,p_5,\dots]$, where the $p_r$ are Newton's power-sums.
Define the algebra homomorphism $\varphi:\Lambda \to \Gamma$ by
		\[
		\varphi(p_r)= \begin{cases}
		2 p_r & \text{if $r$ is odd}, \\
		0 & \text{if $r$ is even}.
		\end{cases}
		\]
		Then \cite[III-8, Ex.~10]{Macdonald1995} implies that for any strict partition
$\lambda$ we have
		\begin{equation} 
		\label{eq:Schur_SchurQ}
		\varphi(s_{D(\lambda)})= 2^{-\ell(\lambda)} (Q_\lambda)^2.
		\end{equation}

		Recall the Frobenius formula for Schur functions:
		\[
		s_{\mu}= \sum_{\class} z_\class^{-1}\ \CHIL{\mu}{\class}\ p_{\class}.
		\]
		Applying the homomorphism $\varphi$ to this identity with
		$\mu=D(\lambda)$, we obtain
		\[
		\varphi(s_{D(\lambda)})= \sum_{\class \in \OP_{2n}} 2^{\ell(\class)}
		z_\class^{-1}\ \CHIL{D(\lambda)}{\class}\ p_\class.
		\]
		And, recall the Frobenius formula for Schur $Q$-functions:
		\[
		Q_\lambda = \sum_{\nu \in \OP_n} 
		2^{\ell(\nu)} z_\nu^{-1}\ \CHIS{\lambda}{\nu}\ p_\nu.
		\]
		Substituting these formulas to \eqref{eq:Schur_SchurQ},
		we have for any $\lambda\in\SP_n$
		\begin{equation} \label{eq:Schur_SchurQ2}
		\sum_{\class \in \OP_{2n}} 
		2^{\ell(\class)}
		z_\class^{-1}\ \CHIL{D(\lambda)}{\class}\ p_\class
		= 2^{-\ell(\lambda)} 
		\left( \sum_{\nu \in \OP_n} 
		2^{\ell(\nu)} z_\nu^{-1}\ \CHIS{\lambda}{\nu}\ p_\nu\right)^2.
		\end{equation}
		By comparing the coefficients of $p_{(1^{2n})}=p_{(1^{n})}p_{(1^{n})}$ in both
		sides of \eqref{eq:Schur_SchurQ2}, we find 
		\begin{equation} \label{eq:f_to_g2}
		\frac{f^{D(\lambda)}}{(2n)!}  = 
		2^{-\ell(\lambda)} \Big(\frac{g^\lambda}{n!}\Big)^2.
		\end{equation}
		
		\medskip

		First we assume that $\class$ is an odd partition
		which does not have parts equal to $1$, i.e., 
		$m_1(\class)=0$.
		By comparing the coefficients of $p_{\class \cup (1^{2n-|\class|})}$ in both
		sides of \eqref{eq:Schur_SchurQ2} we find
		\[
		\frac{\CHIL{D(\lambda)}{\class\cup (1^{n-|\class|})}}{z_{\class\cup
				(1^{n-|\class|})}} = 2^{-\ell(\lambda)} \sum_{\substack{ \mu^1, \mu^2 \\ \mu^1
				\cup \mu^2 =\class}} \frac{ \CHIS{\lambda}{\mu^1 \cup (1^{n-|\mu^1|})}}{z_{\mu^1
				\cup (1^{n-|\mu^1|})}} \frac{ 
			\CHIS{\lambda}{\mu^2 \cup (1^{n-|\mu^2|})}}{z_{\mu^1
				\cup (1^{n-|\mu^2|})}}.
		\]
		By the assumption $m_1(\class)=0$, we have
		$z_{\class \cup (1^{2n-|\class|})}= z_\class \cdot (2n-|\class|)!$
		and  $z_{\mu^i \cup (1^{n-|\mu^i|})}= z_{\mu^i} \cdot (n-|\mu^i|)!$.
		Thus, we obtain
		\[
		\frac{\CHIL{D(\lambda)}{\class\cup (1^{n-|\class|})}}{z_\class \cdot (2n-|\class|)!}
		= 2^{-\ell(\lambda)} \sum_{\substack{ \mu^1, \mu^2  \\
				\mu^1 \cup \mu^2 =\class}}
		\frac{ \CHIS{\lambda}{\mu^1 \cup (1^{n-|\mu^1|})}}{z_{\mu^1} \cdot (n-|\mu^1|)!} 
		\frac{ \CHIS{\lambda}{\mu^2 \cup (1^{n-|\mu^2|})}}{z_{\mu^2} \cdot (n-|\mu^2|)!}.
		\]
		Taking the quotient of this and \eqref{eq:f_to_g2}, 
		we have
		\begin{multline*}
		\frac{1}{z_\class} \frac{(2n)!}{(2n-|\class|)!} 
		\frac{\CHIL{D(\lambda)}{\class \cup(1^{2n-|\class|})}}{f^{D(\lambda)}}
		=\\ \sum_{\substack{ \mu^1, \mu^2  \\
				\mu^1 \cup \mu^2 =\class}}
		\frac{1}{z_{\mu^1} z_{\mu^2}} 
		\frac{n!}{(n-|\mu^1|)!} \frac{\CHIS{\lambda}{\mu^1 \cup (1^{n-|\mu^1|})}}{g^\lambda}
		\frac{n!}{(n-|\mu^2|)!} \frac{\CHIS{\lambda}{\mu^2 \cup (1^{n-|\mu^2|})}}{g^\lambda},
		\end{multline*}
		which is equivalent to 
		\[
		\Ch_\class\big(D(\lambda)\big)= 
		\sum_{\substack{ \mu^1, \mu^2  \\
				\mu^1 \cup \mu^2 =\class}}
		\frac{z_\class}{z_{\mu^1} z_{\mu^2}} 
		\p_{\mu^1}(\lambda) \p_{\mu^2}(\lambda).
		\]
		It is easy to see that
		this is equivalent to the desired formula.
		Thus, we completed the proof of the theorem under the assumption 
		$m_1(\class)=0$.
		
		\medskip
		
		In the general case we write $\pi=\tilde{\pi} \cup (1^r)$ with
$m_1(\tilde{\pi})=0$ and $r=m_1(\pi)$. We apply \cref{thm:linear-spin} for
$\tilde{\pi}$; simple manipulations with the binomial coefficients imply that
the claim holds true for $\pi$ as well. 
	\end{proof}

\section{Spin characters in terms of linear characters}
\label{sec:spin-linear}

	Formulas \eqref{eq:example-chd}--\eqref{eq:linear-in-spin} can be viewed as an upper-triangular system of
	equations with unknowns $(\p_{\class})_{\class\in\OP}$. It can be solved, for
	example
	\begin{equation}
	\label{eq:example-chd2}
	\left.
	\begin{aligned}
	\p_{k_1} &=  
	\ChD_{k_1}, \\
	\p_{k_1,k_2} &=  \ChD_{k_1,k_2} - \ChD_{k_1}\ 
	\ChD_{k_2}, \\
	\p_{k_1,k_2,k_3} &=  \ChD_{k_1,k_2,k_3} \\ & -
	\ChD_{k_1,k_2}\
	\ChD_{k_3} 
	-
	\ChD_{k_1,k_3}\
	\ChD_{k_2} 
	-
	\ChD_{k_2,k_3}\ 
	\ChD_{k_1} 
	\\ &
	+ 3 \ChD_{k_1} \ChD_{k_2} \ChD_{k_3},  \\
	\vdots
	\end{aligned} \right\}
	\end{equation}
	
The general pattern is given by the following result. %gives an explicit
In this way several problems involving spin characters are reduced to
investigation of their linear counterparts.

	\begin{theorem}
		\label{theo:spin-in-linear}
		For any $\class\in\OP$
		\begin{equation} 
		\label{eq:spin-in-linear}
		\p_\class = \sum_{\partition} (-1)^{|\partition|-1}\ (2|\partition|-3)!!\
		\prod_{b\in \partition} \ChD_{(\class_i:i \in b)}, 
		\end{equation}
		where the sum runs over all set-partitions of the set 
		$[\ell(\class)]$; by definition $(-1)!!=1$.
	\end{theorem}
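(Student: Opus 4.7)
The plan is to recognize the system \eqref{eq:example-chd}--\eqref{eq:linear-in-spin} from \cref{thm:linear-spin} as a single quadratic identity between formal power series, and then to invert it. Fix $\class=(k_1,\dots,k_l)\in\OP$ and work in the square-free ring $R=\mathbb{Q}[z_1,\dots,z_l]/(z_1^2,\dots,z_l^2)$, whose $\mathbb{Q}$-basis consists of the monomials $z^S:=\prod_{i\in S}z_i$ indexed by subsets $S\subseteq[l]$. For each nonempty $S\subseteq[l]$ set $a_S:=\ChD_{(k_i:i\in S)}$ and $b_S:=\p_{(k_i:i\in S)}$, and form the generating series
\begin{equation*}
A := \sum_{\emptyset\neq S\subseteq[l]} a_S\, z^S,
\qquad
B := \sum_{\emptyset\neq S\subseteq[l]} b_S\, z^S.
\end{equation*}

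The first step is to check that $A = B + \tfrac{1}{2}B^2$ in $R$. Because $z_i^2=0$, the product $B^2$ equals the sum of $b_{S_1}b_{S_2}\, z^{S_1\cup S_2}$ over \emph{ordered} pairs of disjoint nonempty subsets of $[l]$, so dividing by $2$ collects these into a sum over unordered two-block set-partitions; together with the linear term $B$ this reproduces \eqref{eq:linear-in-spin} verbatim.

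Solving the quadratic and taking the branch with $B(0)=A(0)=0$ yields $B=\sqrt{1+2A}-1$. Expanding via the binomial series gives
\begin{equation*}
B = \sum_{n\geq 1}\binom{1/2}{n}(2A)^n = \sum_{n\geq 1}\frac{(-1)^{n-1}(2n-3)!!}{n!}\, A^n,
\end{equation*}
where the coefficient identity $\binom{1/2}{n}\,2^n = \frac{(-1)^{n-1}(2n-3)!!}{n!}$ (with the convention $(-1)!!=1$) is an immediate computation from the definition of the generalized binomial coefficient. To conclude, I would extract the coefficient of $z^{[l]}=z_1\cdots z_l$: the coefficient of $z^{[l]}$ in $A^n$ is the sum of $\prod_i a_{S_i}$ over \emph{ordered} set-partitions $(S_1,\dots,S_n)$ of $[l]$ into $n$ nonempty blocks, and the prefactor $\tfrac{1}{n!}$ converts this into a sum over unordered set-partitions $\partition$ with $|\partition|=n$. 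Summing over $n$ gives exactly \eqref{eq:spin-in-linear}.

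The only delicate point is the branch choice for the square root, which is forced unambiguously by the vanishing of $A$ and $B$ at the origin; no separate combinatorial identity on set-partitions needs to be established by hand, and the whole proof reduces to a single quadratic inversion in a commutative ring.
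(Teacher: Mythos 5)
Your proof is correct, and it takes a genuinely different route from the paper's. The paper solves the upper-triangular system \eqref{eq:example-chd}--\eqref{eq:linear-in-spin} by recursively unrolling \eqref{eq:spin-vs-linear3}, and interprets the resulting expansion as a sum over \emph{total binary partitions} (binary trees with prescribed leaf labels), then invokes the enumeration $(2|\partition|-3)!!$ for such trees from \cite[Example 5.2.6]{Stanley1999}; the sign $(-1)^{|\partition|-1}$ is tracked through the non-leaf vertices. You instead package the entire system \eqref{eq:linear-in-spin} (applied to each sub-tuple $(k_i:i\in S)$) as the single quadratic $A=B+\tfrac12 B^2$ in the square-free ring $\mathbb{Q}[z_1,\dots,z_l]/(z_i^2)$, solve it formally as $B=\sqrt{1+2A}-1$ (the branch is forced because $A,B$ are nilpotent and $B$ is determined order by order from $B=A-\tfrac12 B^2$), and read the coefficient $(-1)^{n-1}(2n-3)!!/n!$ off the binomial series. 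Extracting the coefficient of $z_1\cdots z_l$ then reproduces \eqref{eq:spin-in-linear} exactly. The two arguments are, of course, two faces of the same coin: your generating-function inversion is essentially a direct derivation of the tree count that the paper cites as a black box, so your approach is self-contained and avoids any external enumerative input, while the paper's is shorter given that reference and makes the tree-combinatorics explicit. Both are valid and yield identical coefficients.
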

	\begin{proof}
The process of solving the upper-triangular system of equations
\eqref{eq:example-chd}--\eqref{eq:linear-in-spin} can be formalized as
follows. By singling out the partition $\partition$ in
\eqref{eq:linear-in-spin} which consists of exactly one block we may express
the spin character $\p_\class$ in terms of the linear character $\ChD_\class$
and spin characters $\p_{\class'}$ which correspond to partitions
$\class'\in\OP$ with $\ell(\class')<\ell(\class)$:
		\begin{equation}
		\label{eq:spin-vs-linear3}
		\p_\class= \ChD_{\class} -
		\sum_{\substack{ \partition: \\ |\partition|= 2} } \prod_{b\in \partition} \p_{(\class_i : i \in b)}.
		\end{equation}
		By applying this procedure recursively to the spin characters on the right-hand
		side, we end up with an expression for $\p_\class$ as a linear combination (with
		integer coefficients) of the products of the form
		\begin{equation} 
		\label{eq:mysummand}
		\prod_{b\in \partition} \p_{(\class_i : i \in b)} 
		\end{equation}
		over set-partitions $\partition$ of $[\ell(\class)]$. The remaining difficulty is to
		determine the exact value of the coefficient of \eqref{eq:mysummand} in this
		linear combination.
		
		\medskip
		
		The above recursive procedure can be encoded by a tree in which each non-leaf
vertex has two children and the leaves are labelled by the factors in
\eqref{eq:mysummand} or, equivalently, by the blocks of the set-partition $I$.
Such trees are known under the name of \emph{total binary partitions}; the
cardinality of such trees with prescribed leaf labels $\partition$ is equal to
$(2 |\partition| - 3)!!$ \cite[Example 5.2.6]{Stanley1999}.
		
		Our recursive procedure involves change of the sign; such a change occurs once
		for each non-leaf vertex. Thus each total binary tree contributes with
		multiplicity $(-1)^{|\partition|-1}$ which concludes the proof.
	\end{proof}

	\section{Proof of spin Stanley formula}
	\label{sec:proof}
\begin{proof}[Proof of \cref{thm:spin-Stanley}]
	We start with \cref{theo:spin-in-linear} and substitute each normalized linear
	character $\Ch_\nu$ which contributes to the right-hand side of
	\eqref{eq:spin-in-linear} by linear Stanley character formula
	\eqref{eq:stanley-linear}. 
	
	We shall discuss in detail the case when $\pi=(\pi_1,\pi_2)$ consists of just
	two parts. We will view $\Sym{\pi_1}$, $\Sym{\pi_2}$ and $\Sym{\pi_1+\pi_2}$ as
	the groups of permutations of, respectively, the set $\{1,\dots,\pi_1\}$,
	$\{\pi_1+1,\dots,\pi_1+\pi_2\}$ and $\{1,\dots,\pi_1+\pi_2\}$. In this way we
	may identify $\Sym{\pi_1}\times\Sym{\pi_2}$ as a subgroup of
	$\Sym{\pi_1+\pi_2}$. Thanks to these notations
	\begin{multline} 
	\label{eq:two-parts}
	\p_{\pi_1,\pi_2}(\lambda) =  
{ \frac{(-1)!!}{2} \Ch_{\pi_1,\pi_2} \big( D(\lambda) \big) -
		\frac{1!! }{2^2} \Ch_{\pi_1} \big( D(\lambda) \big) 
		\ \Ch_{\pi_2} \big( D(\lambda) \big) =} \\
	\\ 
	{\frac{(-1)!!}{2} 
		\sum_{\substack{\sigma_1,\sigma_2\in\Sym{\pi_1+\pi_2} \\
				\sigma_1\sigma_2=(\pi_1,\pi_2)}}
		(-1)^{\sigma^1} N_{\sigma_1,\sigma_2}\big( D(\lambda) \big)
	}  
	-\frac{1!! }{2^2}
	\sum_{\substack{\sigma_1,\sigma_2\in\Sym{\pi_1}\times\Sym{\pi_2} 	\\ \sigma_1\sigma_2=(\pi_1,\pi_2)}}
	(-1)^{\sigma_1} N_{\sigma_1,\sigma_2}
	\big( D(\lambda) \big),
	\end{multline}
	where the last equality follows from the observation that a double sum over
factorizations of $\pi_1\in\Sym{\pi_1}$ and over factorizations of
$\pi_2\in\Sym{\pi_2}$ can be combined into a single sum over factorizations of
$(\pi_1,\pi_2)\in\Sym{\pi_1}\times \Sym{\pi_2}$.
	
	In general, 
	\begin{equation} 
	\label{eq:Stanley-not-ready}
	\p_{\pi}(\lambda) = 
	\sum_{\substack{\sigma_1,\sigma_2\in\Sym{|\pi|} \\
			\sigma_1\sigma_2=\pi}} c_{\sigma_1,\sigma_2}\ (-1)^{\sigma_1}\ N_{\sigma_1,\sigma_2}\big(D(\lambda)\big)
	\end{equation}
	for some combinatorial factor $c_{\pi_1,\pi_2}$. The exact value of this factor is equal to
	\begin{equation}	
	\label{eq:stirling}
	c_{\sigma_1,\sigma_2} = C_m= (-1) \sum_p \stirlingS{m}{p} \left(-\frac{1}{2}\right)^p (2p-3)!!, 
	\end{equation}
	where $m$ denotes the number of orbits of $\langle \sigma_1,\sigma_2\rangle$,
	and $\stirlingS{m}{p}$ denotes Stirling numbers of the second kind. 
	Indeed, the set-partition $\partition$ (over which we sum in
\eqref{eq:spin-in-linear}) can be identified with a set-partition of the set
$C(\pi)$ of the cycles of the permutation $\pi\in\Sym{|\pi|}$. With this in
mind we see that to $c_{\sigma_1,\sigma_2}$ contribute only these
set-partitions $\partition$ on the right-hand side of \eqref{eq:spin-in-linear}
for which $\partition$ is bigger than the set-partition given by the orbits of
$\langle \sigma_1,\sigma_2\rangle$. The collection of such set-partitions can
be identified with the collection of set-partitions of an $m$-element set
(i.e.~the set of orbits of $\langle \sigma_1,\sigma_2\rangle$).

The exact form of the right-hand side of \eqref{eq:stirling} is not important; the key point is that it depends only on $m$, the number of orbits of $\langle \sigma_1,\sigma_2\rangle$.
In order to evaluate its exact value $C_m$ we shall consider \eqref{eq:Stanley-not-ready} in the
special case of $\pi=1^m$. In this case $\sigma_2=\sigma_1^{-1}$; we denote by
$l=|C(\sigma_1)|$ the number of cycles of $\sigma_1$. It follows that
\[ \p_{1^m}(\lambda)=n^{\downarrow m} = 
\sum_l \stirlingF{m}{l}\ C_l\ (-1)^{m-l}\ (2n)^{l},
\]
where $n=|\lambda|$ and $\stirlingF{m}{l}$ denotes Stirling number of the first
kind. Both sides of the equality are polynomials in the variable $n$; by
comparing the leading coefficients we conclude that
\[ C_m = \frac{1}{2^m}.\]
By substituting this value to \eqref{eq:Stanley-not-ready} we conclude the proof.
\end{proof}

\section*{Acknowledgments}

We thank Valentin F\'eray and Maciej Do\l\k{e}ga for several inspiring discussions.

	\bibliographystyle{plain}
	\bibliography{Shifted.bib}

\end{document}